\documentclass[11pt]{amsart}
\usepackage{mathpazo}
\usepackage{geometry}              
\geometry{letterpaper}                   
\usepackage{graphicx}
\usepackage[parfill]{parskip}
\usepackage{amssymb}
\usepackage[all]{xy}
\usepackage{color}
\usepackage{tikz}
\usetikzlibrary{arrows,decorations.markings,fit,matrix,positioning,calc,shapes.symbols,shapes}
\usepackage{dsfont}

\definecolor{dmagenta}{rgb}{.5,0,.5} 
\definecolor{dred}{rgb}{.5,0,0} 
\definecolor{dgreen}{rgb}{0,.5,0} 
\definecolor{dblue}{rgb}{0,0,0.5} 
\definecolor{black}{rgb}{0,0,0} 
\definecolor{vdgreen}{rgb}{0,.3,0} 
\definecolor{vdred}{rgb}{.3,0,0} 
\definecolor{red}{rgb}{1,0,0} 






\newcommand{\hairy}{\mathcal H}  

\newcommand{\ext}{\bigwedge\nolimits}

\DeclareMathOperator{\Tr}{Tr}  
\DeclareMathOperator{\SP}{Sp}  

\DeclareMathOperator{\im}{im}  

\DeclareMathOperator{\GL}{GL}  





\newtheorem{proposition}{Proposition}[section]
\newtheorem{theorem}[proposition]{Theorem}

\theoremstyle{remark}

\theoremstyle{definition}

\newtheorem{remark}[proposition]{Remark}

\newtheoremstyle{red}{3pt}{3pt}{\color{red}}{}{\itshape}{.}{.5em}{}
\theoremstyle{red}

\makeatletter
\def\Ddots{\mathinner{\mkern1mu\raise\p@
\vbox{\kern7\p@\hbox{.}}\mkern2mu
\raise4\p@\hbox{.}\mkern2mu\raise7\p@\hbox{.}\mkern1mu}}
\makeatother

\tikzset{
empty/.style={inner sep=-1pt,minimum size=0mm},
emptyunit/.style={circle,draw=white,fill=white,thick, inner sep=0pt,minimum size=2mm},
emptyantipode/.style={circle,draw=white,fill=white,thick, inner sep=0pt,minimum size=4mm},
operad/.style={circle,draw=red!50,fill=red!20,thick, inner sep=0pt,minimum size=6mm},
hopf/.style={signal, signal to=east, signal from=west,draw=brown!50,fill=brown!20,thick, inner sep=2pt,minimum size=6mm},
lhopf/.style={signal, signal to= west, signal from= east,draw=brown!50,fill=brown!20,thick, inner sep=2pt,minimum size=6mm},
antipode/.style={circle,draw=purple!50,fill=purple!20,thick, inner sep=0pt,minimum size=4mm},
unit/.style={circle,draw=black,fill=white,thick, inner sep=0pt,minimum size=2mm},
break/.style={inner sep=0pt,minimum size=5mm},
block/.style={draw=blue,fill=blue!20,thick,inner sep=10pt}
outer/.style={}
}


\title[The ES-trace detects all top-level partitions]{Addendum to ``The Johnson cokernel and the Enomoto-Satoh invariant":
The ES-trace detects all top-level partitions}

\author{Jim Conant}

\begin{document}
\begin{abstract}
The degree $d$ part of the cokernel $\mathsf C_d$ of the Johnson homomorphism decomposes into irreducible $\SP$-modules indexed by partitions of $d-2r$ for $r\geq 0$:  $$\mathsf C_d\cong \mathsf C_d(d)\oplus \mathsf C_d(d-2)\oplus\cdots.$$ 
In this addendum we calculate $\mathsf{C}_d(d)$ precisely: it is isomorphic to the $\GL(V)$-decomposition of a space of coinvariants $(V^{\otimes d})_{D_{2d}}$, and the isomorphism is induced by Enomoto and Satoh's trace map. This establishes Conjecture 7.2 of the paper ``The Johnson Cokernel and the Enomoto-Satoh invariant."
\end{abstract}
\maketitle
\section{Introduction}
We refer to \cite{C} for all definitions.

Let $\mathsf C_d$ be the degree $d$ part of the Johnson cokernel. 
It is known that $\mathsf C_d\cong  \mathsf C_d(d)\oplus \mathsf C_d(d-2)\oplus\cdots,$ where $\mathsf C_d(k)$ is a direct sum of irreducible $\SP$-modules indexed by partitions of the integer $k$.

It is the purpose of this addendum to prove the following theorem, which establishes Conjecture 7.2 of \cite{C}. We slightly modify our notation and use the letter $V$ to denote our symplectic vector space, rather than $H$ as in \cite{C}. 
Recall that $V^{\langle d\rangle}\subset V^{\otimes d}$ is the intersection of the kernels of all pairwise contractions and that $\pi\colon V^{\otimes d}\to V^{\langle d\rangle}$ is a natural projection. (See section 5 of \cite{C}). Recall also the Enomoto-Satoh trace map (cf. Theorem 4.2 of \cite{C}) $$\Tr^{ES}\colon \mathsf C_d\to [V^{\otimes d}]_{D_{2d}}.$$ 

\begin{theorem}\label{thm:main}
$\pi\circ \Tr^{ES}$ induces an isomorphism
$\mathsf C_d(d)\cong [V^{\langle d\rangle}]_{D_{2d}}.$
\end{theorem}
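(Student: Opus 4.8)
We outline the intended proof. The plan is, after a reduction via Schur's lemma, to translate the statement into the representation theory of $\sym{d}$ using Schur--Weyl duality. In the stable range every module in sight is a semisimple $\SP(V)$-module, so $\pi\circ\Tr^{ES}$ is determined by its behavior on isotypic components. The subspace $V^{\langle d\rangle}\subset V^{\otimes d}$ is a direct sum of the irreducible symplectic modules $V_{\langle\lambda\rangle}$ with $\lambda$ a partition of $d$, and so is its space of $D_{2d}$-coinvariants: the pairwise contractions on $V^{\otimes d}$ are permuted among themselves by the cyclic rotation and the reversal, so $V^{\langle d\rangle}$ is a $D_{2d}$-submodule of $V^{\otimes d}$. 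Since $\mathsf C_d(k)$ for $k<d$ is a sum of $V_{\langle\mu\rangle}$ with $\mu\vdash k$, none of which occurs in $V^{\langle d\rangle}$, Schur's lemma forces $\pi\circ\Tr^{ES}$ to annihilate $\mathsf C_d(d-2)\oplus\mathsf C_d(d-4)\oplus\cdots$ and to restrict to an $\SP(V)$-map $\mathsf C_d(d)\to[V^{\langle d\rangle}]_{D_{2d}}$; the theorem asserts that this restriction is an isomorphism.

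First we identify the target. Schur--Weyl duality for $\SP(V)$ (valid once $\dim V\ge 2d$) gives an $\SP(V)\times\sym{d}$-equivariant isomorphism $V^{\langle d\rangle}\cong\bigoplus_{\lambda\vdash d} V_{\langle\lambda\rangle}\otimes M^\lambda$, where $M^\lambda$ is the Specht module and $\sym{d}$ permutes the tensor factors. Taking $D_{2d}$-coinvariants is exact over $\F$ and commutes with this decomposition, so $[V^{\langle d\rangle}]_{D_{2d}}\cong\bigoplus_{\lambda\vdash d}V_{\langle\lambda\rangle}\otimes(M^\lambda)_{D_{2d}}$. Here $D_{2d}\hookrightarrow\sym{d}$ is generated by the $d$-cycle and the reversal $i\mapsto d+1-i$, with the reversal carrying whatever sign $\varepsilon$ the definition of $\Tr^{ES}$ in \cite{C} attaches to it; over $\F$ one has $(M^\lambda)_{D_{2d}}\cong(M^\lambda\otimes\varepsilon)^{D_{2d}}$, of dimension $\tfrac1{2d}\sum_{g\in D_{2d}}\varepsilon(g)\chi^\lambda(g)$. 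Equivalently, the Schur--Weyl dual of $[V^{\langle d\rangle}]_{D_{2d}}$ is the induced $\sym{d}$-module $\mathrm{Ind}_{D_{2d}}^{\sym{d}}\varepsilon$.

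Next we identify the source together with the map, using the exact sequence $0\to\mathfrak h_{1,d}\to V\otimes\cL_{d+1}\xrightarrow{[\,,\,]}\cL_{d+2}\to 0$, in which $\cL_m$ is the degree-$m$ part of the free Lie algebra on $V$ and $\mathfrak h_{1,d}$ is the target of the $d$-th Johnson homomorphism. As $\cL_m\subset V^{\otimes m}$, a partition of $d$ can contribute to $V\otimes\cL_{d+1}\subset V^{\otimes(d+2)}$ only through the stratum on which exactly one symplectic contraction is performed; projecting onto the $\SP(V)$-isotypic components indexed by partitions of $d$ (an exact operation) turns the displayed sequence into an exact sequence of $\sym{d}$-modules, built functorially from the Lie representations $\Lie(d+1)=\mathrm{Ind}_{C_{d+1}}^{\sym{d+1}}\zeta$ and $\Lie(d+2)$ through the operadic composition with $\Lie(2)$ (which supplies the bracket) together with the Littlewood restriction rule (which governs how a single contraction decomposes the restriction of $S^\mu(V)$ to $\SP(V)$). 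This makes the Schur--Weyl dual of the single-contraction part of $\mathfrak h_{1,d}$ explicit, and one checks that the $\sym{d}$-map dual to $\pi\circ\Tr^{ES}$ is this description followed by a natural projection to $\mathrm{Ind}_{D_{2d}}^{\sym{d}}\varepsilon$. By Hain's theorem the Johnson image is the Lie subalgebra of $\bigoplus_{d}\mathfrak h_{1,d}$ generated in degree one, so $\im(\tau_d)=[\mathfrak h_{1,1},\im(\tau_{d-1})]$; an induction on $d$, anchored by the low-degree calculations of \cite{C}, then reduces the isomorphism claim to showing that on the single-contraction stratum $[\mathfrak h_{1,1},\im(\tau_{d-1})]$ coincides with the kernel of the $\sym{d}$-map dual to $\pi\circ\Tr^{ES}$.

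Surjectivity of $\pi\circ\Tr^{ES}$ onto $[V^{\langle d\rangle}]_{D_{2d}}$ is the easier half: since $\Tr^{ES}$ factors through the quotient $\mathfrak h_{1,d}\twoheadrightarrow\mathsf C_d$, it suffices to realize every element of $[V^{\langle d\rangle}]_{D_{2d}}$ as $\pi\circ\Tr^{ES}$ of an honest element of $\mathfrak h_{1,d}$, and one does this with explicit families of iterated-bracket derivations, suitably (anti)symmetrized, generalizing Morita's original trace construction; their ES-traces, after the quotient by $D_{2d}$ and the projection $\pi$, are cyclic words that visibly span the target. The main obstacle is injectivity, which is precisely the title's assertion that $\Tr^{ES}$ detects every top-level partition: one must pin down $[\mathfrak h_{1,1},\im(\tau_{d-1})]$ on the single-contraction stratum tightly enough to identify it with $\ker(\pi\circ\Tr^{ES})$. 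For this I would use the Dynkin idempotent and the operadic structure of $\Lie(m)$ to present a bracket of derivations, after the single contraction, as the image of an explicit $\sym{d}$-equivariant operator, so that the character count $\dim[V^{\langle d\rangle}]_{D_{2d}}=\sum_{\lambda\vdash d}(\dim V_{\langle\lambda\rangle})\cdot\tfrac1{2d}\sum_{g\in D_{2d}}\varepsilon(g)\chi^\lambda(g)$ from the second step provides the bookkeeping that closes the induction.
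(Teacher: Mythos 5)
Your plan takes a genuinely different route from the paper, and the comparison is instructive, but as written it has a real gap at the decisive step. For context: the paper does not argue representation--theoretically at all. It first proves a graphical presentation $\mathsf C_d(V)\cong \mathcal H_{1,d}\langle V\rangle/\beta^{d-1}(\mathcal H^{\mathrm{ord}}_{d,d}\langle V\rangle)$ of the whole cokernel via the hairy graph complex (Theorem 2.1 of the addendum, resting on Hain's theorem and Corollary 3.7 of \cite{CKV-HHGH}), observes that the top-level partitions live on the rank-one graphs, and then reduces the main theorem to an explicit combinatorial check: $\beta^{d-1}$ applied to a rank-one ordered graph yields either zero, or exactly the ``hair-slide'' relation presenting $(V^{\otimes d})_{D_{2d}}$, or a commutator $\hat a\hat b-\hat b\hat a$ that already vanishes modulo cyclic permutation. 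Your first two steps (Schur's lemma forcing $\pi\circ\Tr^{ES}$ to kill the lower strata, and the identification of the Schur--Weyl dual of the target with $\mathrm{Ind}_{D_{2d}}^{\sym{d}}\varepsilon$) are correct and essentially reproduce the Remark following the theorem; they identify the target but not the map.

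The gap is in the second half. Surjectivity is only asserted (``explicit families \dots visibly span the target''), and injectivity --- which is the entire content of the theorem --- is deferred to ``the Dynkin idempotent \dots so that the character count \dots closes the induction.'' A character count of the target cannot close the argument: surjectivity gives $\dim\mathsf C_d(d)\ge\dim[V^{\langle d\rangle}]_{D_{2d}}$, and for equality you must show that the iterated brackets of tripods span the \emph{entire} kernel of $\pi\circ\Tr^{ES}$ on the single-contraction stratum; you give no mechanism for this. Moreover the proposed induction does not obviously close: Hain's theorem gives $\im(\tau_d)=[\mathfrak{h}_{1,1},\im(\tau_{d-1})]$, but bracketing with $\mathfrak{h}_{1,1}$ mixes the contraction strata, so an inductive hypothesis about the top stratum $\mathsf C_{d-1}(d-1)$ alone does not control the top stratum of $[\mathfrak{h}_{1,1},\im(\tau_{d-1})]$ --- you would need $\im(\tau_{d-1})$ on all strata. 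This is exactly the difficulty the paper's graphical presentation sidesteps: there the image of the bracket map is computed once and for all as $\beta^{d-1}(\mathcal H^{\mathrm{ord}}_{d,d}\langle V\rangle)$, and its rank-one part is a short explicit list of relations. To make your route work, the missing lemma is an exact $\sym{d}$-module description of the single-contraction part of the image of the iterated bracket map; without it the proposal is a plan rather than a proof.
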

\begin{remark}
The $\SP$-decomposition of $ [V^{\langle d\rangle}]_{D_{2d}}$ is isomorphic to the $\GL(V)$-decomposition of $ [V^{\otimes d}]_{D_{2d}}$, which can be analyzed using standard representation theory methods. See section 7 of \cite{C}.
\end{remark}


\section{Proof of the main theorem}
%
Recall from \cite{C} that $C_k\mathcal H(V)$ denotes the part of the hairy Lie graph complex spanned by connected graphs with $k$ vertices of valence $\geq 3$ colored by elements of the Lie operad, with some number of hairs labeled by elements of the vector space $V$. We also define the \emph{Lie degree} of a hairy Lie graph as the sum of the degrees of the Lie operad elements at the vertices of the graph, and where the degree of an element of the Lie operad is the number of trivalent vertices in the tree. Let us denote the set of elements in $C_k\mathcal H(V)$ which are of Lie degree $d$ as $\mathcal H_{k,d}(V)$. Let $\mathcal H^{\mathrm{ord}}_{k,d}(V)$ be defined similarly, but with a specified global order on the $\mathsf{Lie}$-colored vertices.  It is clear that $\mathcal H^{\mathrm{ord}}_{1,d}=\mathcal H_{1,d}$.

 Let $\beta\colon \mathcal H^{\mathrm{ord}}\to \mathcal H^{\mathrm{ord}}$ be defined by summing over all contractions between vertices $1$ and $2$, the resulting graphs being given the obvious ordering. 
  Notice that $\mathcal H^{\mathrm{ord}}_{d,d}$ must have all vertices colored by tripods. 
  We have
 $$\beta^{d-1}\colon \mathcal H^{\mathrm{ord}}_{d,d}(V)\to \mathcal H_{1,d}(V).$$
See Figure~\ref{fig:fig1} for examples.

 The spaces $\mathcal H_{k,d}(V)$ and $\mathcal H^{\mathrm{ord}}_{k,d}(V)$ have versions $\mathcal H_{k,d}\langle V\rangle $ and $\mathcal H^{\mathrm{ord}}_{k,d}\langle V\rangle $ in which the labeling coefficients are assumed to be in $V^{\langle s\rangle}$, where $s$ is the number of hairs. (See section 5 of \cite{C}.)

The next theorem gives an exact graphical characterization of the (stable) Johnson cokernel.
%
%
%

\begin{theorem}
If $\dim(V)\gg d$, we have
$$\mathsf C_d(V)\cong \hairy_{1,d}\langle V\rangle/\beta^{d-1}(\mathcal H^{\mathrm{ord}}_{d,d}\langle V\rangle).$$
\end{theorem}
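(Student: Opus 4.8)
The plan is to deduce the theorem from the homological description of the stable Johnson cokernel already present in \cite{C}, to reduce it to an equality of two subspaces of $\hairy_{1,d}\la V\ra$, and then to establish that equality by expanding the $\Lie$-colored vertices into tripods. \emph{Step 1 (reduction).} For $\dim V\gg d$, \cite{C} identifies $\mathsf C_d(V)$ with the relevant homology of the hairy Lie graph complex in Lie degree $d$; in the shape most convenient here this reads $\mathsf C_d(V)\cong\hairy_{1,d}\la V\ra/\partial\bigl(\hairy_{2,d}\la V\ra\bigr)$, where $\partial$ is the graph differential built from edge contractions together with the symplectic hair pairings (it is the hair-pairing part that keeps this quotient nonzero). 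Restricted to chains with exactly two $\Lie$-colored vertices the differential is, up to the surjective forgetting of the vertex order, the operation $\beta$, so $\partial\bigl(\hairy_{2,d}\la V\ra\bigr)=\beta\bigl(\mathcal H^{\mathrm{ord}}_{2,d}\la V\ra\bigr)$. It therefore suffices to prove the identity of subspaces of $\hairy_{1,d}\la V\ra$
$$\beta\bigl(\mathcal H^{\mathrm{ord}}_{2,d}\la V\ra\bigr)=\beta^{d-1}\bigl(\mathcal H^{\mathrm{ord}}_{d,d}\la V\ra\bigr).$$
If the version of this statement taken from \cite{C} already exhibits the cokernel as $\hairy_{1,d}\la V\ra$ modulo a single $\beta$, Step 1 is a tautology and all the content is below.

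\emph{Step 2 (easy inclusion).} Since $\beta$ lowers the number of $\Lie$-colored vertices by one and preserves Lie degree, $\beta^{d-2}$ carries $\mathcal H^{\mathrm{ord}}_{d,d}\la V\ra$ into $\mathcal H^{\mathrm{ord}}_{2,d}\la V\ra$, so $\beta^{d-1}\bigl(\mathcal H^{\mathrm{ord}}_{d,d}\la V\ra\bigr)=\beta\bigl(\beta^{d-2}(\mathcal H^{\mathrm{ord}}_{d,d}\la V\ra)\bigr)\subseteq\beta\bigl(\mathcal H^{\mathrm{ord}}_{2,d}\la V\ra\bigr)$. \emph{Step 3 (hard inclusion).} For the reverse containment I would use that the $\Lie$ operad is generated by its binary bracket: a $\Lie$-colored vertex of Lie degree $k$ is a linear combination of trivalent trees on $k$ vertices, and a compatible global order on such a tree exhibits the vertex as $\beta^{k-1}$ applied to an element of $\mathcal H^{\mathrm{ord}}_{k,k}$. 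Given $x\in\mathcal H^{\mathrm{ord}}_{2,d}\la V\ra$ with vertex Lie degrees $k_1$ and $k_2$, expanding both vertices in this way gives $\tilde x\in\mathcal H^{\mathrm{ord}}_{d,d}\la V\ra$; one then chooses the global order so that iterating $\beta$ on $\tilde x$ first reassembles the two original vertices (carrying the edge joining them and the $V^{\la s\ra}$-labels along unchanged) and performs the last contraction precisely when $\beta$ does on $x$. Using antisymmetry and IHX to slide the contractions past one another, one checks $\beta^{d-1}(\tilde x)=\pm\,\beta(x)$; it is natural to organize this as an induction on $d$ (equivalently on the number of $\Lie$-colored vertices), peeling off one tripod at a time.

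The main obstacle is Step 3. The operations $\beta$ are not independent --- each application must contract something incident to the first vertex --- so once that vertex has absorbed part of the graph every later $\beta$ meets it, and the ``cross'' contractions between the two original vertices are forced to occur interleaved with the reassembly rather than all at the end. The delicate point is to show that, after summing over the induced orders and imposing the $\Lie$ relations, these contributions collapse to exactly $\beta(x)$ with nothing left over, while keeping the orderings and signs bookkept consistently. The stability hypothesis $\dim V\gg d$ enters only in Step 1, through the citation to \cite{C}: it is what guarantees that the hairy graph homology computes the \emph{stable} cokernel, and that no low-rank degeneracies occur among the traceless labels in $V^{\la s\ra}$.
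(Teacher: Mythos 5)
There is a genuine gap, and it is located exactly where you suspected, but it is worse than a missing computation: the identity you reduce to in Step 1/Step 3 is false. The quotient $\hairy_{1,d}\la V\ra/\partial\bigl(\hairy_{2,d}\la V\ra\bigr)$ corresponds, under the trace isomorphism, to $\mathcal T_d(V)/\sum_k[\mathcal T_k(V),\mathcal T_{d-k}(V)]$, i.e.\ to the degree-$d$ part of the \emph{abelianization} of the derivation algebra. The Johnson cokernel is a different (larger) quotient: by Hain's theorem it is $\mathcal T_d(V)/\im(br)$, where $br$ is the iterated bracket of \emph{tripods} only --- the degree-$d$ piece of the subalgebra generated in degree $1$. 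Your Step 3 equality $\beta\bigl(\mathcal H^{\mathrm{ord}}_{2,d}\la V\ra\bigr)=\beta^{d-1}\bigl(\mathcal H^{\mathrm{ord}}_{d,d}\la V\ra\bigr)$ would force $[\mathcal T_j,\mathcal T_{d-j}]\subseteq\im(br)$ for every $j$, i.e.\ that the cokernel and the abelianization coincide; already for $d=3$ this requires $[\mathcal T_1,\mathcal T_2]=[\mathcal T_1,[\mathcal T_1,\mathcal T_1]]$, which fails because $\mathcal T_2/[\mathcal T_1,\mathcal T_1]$ is nonzero and bracketing with $\mathcal T_1$ does not kill it. Concretely, your reassembly in Step 3 breaks down because the two $\Lie$-colored vertices of an element of $\mathcal H^{\mathrm{ord}}_{2,d}\la V\ra$ carry \emph{arbitrary} trees, not trees lying in the Johnson image; when you expand them into tripods, $\beta$ contracts \emph{all} dotted edges between the current vertices $1$ and $2$, so the cross-pairings of hairs between the two original trees get contracted interleaved with the reassembly and contribute terms that do not cancel --- and cannot, since the two sides are genuinely different subspaces. (You also never carry out Step 3, flagging it as the main obstacle, so the argument is incomplete even on its own terms.)

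The paper's proof avoids this entirely by never comparing images of $\beta$ and $\beta^{d-1}$ inside $\hairy_{1,d}\la V\ra$. Instead it feeds the correct generators in at the source: Hain's theorem gives $\mathsf C_d(V)=\mathcal T_d(V)/\im(br)$ with $br\colon\mathcal T_1(V)^{\otimes d}\to\mathcal T_d(V)$ the iterated tripod bracket, and a commutative diagram
$$\xymatrix{\mathcal T_1(V)^{\otimes d}\ar[r]^-{\pi\circ\Tr}\ar[d]_{br}&\mathcal H^{\mathrm{ord}}_{d,d}\la V\ra\ar[d]^{\beta^{d-1}}\\ \mathcal T_d(V)\ar[r]^-{\pi\circ\Tr}&\mathcal H_{1,d}\la V\ra}$$
in which both horizontal maps are isomorphisms (the bottom one by Theorem 5.4 of \cite{C} via Corollary 3.7 of \cite{CKV-HHGH}, the top one by the same argument adapted to a fixed global order on the vertices). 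Commutativity of the square then transports $\im(br)$ precisely onto $\beta^{d-1}(\mathcal H^{\mathrm{ord}}_{d,d}\la V\ra)$, giving the theorem. If you want to salvage your outline, the piece to replace is the homological description in Step 1: you must start from the tripod generators, not from all two-vertex graphs.
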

\begin{proof}
The Johnson cokernel, by Hain's theorem (cf. introduction to \cite{C}), can be seen as the quotient of $\mathcal T_d(V)$ by iterated brackets of tripods. I.e. let $br\colon \mathcal T_1(V)^{\otimes d}\to \mathcal T_d(V)$ be the map $t_1\otimes\cdots\otimes t_d\mapsto [[[t_1,t_2],t_3],\cdots,t_d]$. Then stably $C_d(V)=\mathcal T_d(V)/\im(br)$.

We consider the following commutative diagram:
$$\xymatrix{
\mathcal T_1(V)^{\otimes d}\ar@{->}^{\Tr}[r]\ar@{->}_{br}[d]&\mathcal H^{\mathrm{ord}}_{d,d}(V)\ar@{->}[d]^{\beta^{d-1}}\ar@{->}^{\pi}[r]&\mathcal H^{\mathrm{ord}}_{d,d}\langle V\rangle\ar@{->}_{\beta^{d-1}}[d]\\
\mathcal T_d(V)\ar@{->}^{\Tr}[r]&\mathcal H_{1,d}(V)\ar@{->}^{\pi}[r]&\mathcal H_{1,d}\langle V\rangle
}$$
The trace map $\mathcal T_1(V)^{\otimes d}\to \mathcal H^{\mathrm{ord}}_{d,d}(V)$ is defined on tensor products of tripods, by taking their disjoint union and adding edges in all possible ways. The tripods inherit an order from the tensor product. 

The commutativity of the left square is related to the fact that $\Tr$ is a chain map. Let $t_1\otimes \cdots \otimes t_d$ be a tensor product of tripods. Applying $br$ to this tensor product sums over joining $t_1$ to $t_2$ along a (solid) edge in all ways, then joining $t_3$ to the result and so forth. Taking the trace now adds some additional dotted edges connecting some of the hairs. On the other hand, taking the trace of  $t_1\otimes \cdots \otimes t_d$ means adding several dotted edges to some of the hairs. Applying $\beta$ now joins $t_1$ to $t_2$ by converting dotted edges joining them to solid edges, i.e. contracting the first two trees in all possible ways in the graph. Applying $\beta$ again contracts the now-joined $t_1$ and $t_2$ with the third tree etc. So $\beta^{d-1}(\Tr(t_1\otimes\cdots\otimes t_d))$ corresponds to 
taking an iterated bracket of $t_1\otimes\cdots\otimes t_d$ followed by adding some external edges, the same result as going the other way around the square.

The commutativity of the right square follows since $\pi$ operates on the labeling coefficients, which are not involved in the definition of $\beta$.

The composition $\pi\circ\Tr^C$ along the bottom is an isomorphism by Theorem 5.4 of \cite{C} which quotes Corollary 3.7 of \cite{CKV-HHGH}. In fact the top composition $\pi\circ\Tr^C$ is also an isomorphism, which follows from the same method of proof as in \cite{CKV-HHGH}. The only difference is that instead of defining $\Tr$ on $\ext^d \mathcal T_1(V)$, which yields an order on the vertices of the image graph up to even permutation, we instead define it on $\mathcal T_1(V)^{\otimes d}$, yielding a fixed order on the vertices. However, none of the arguments used in the proof of Corollary 3.7 of \cite{CKV-HHGH} is affected by the presence of a global order. 

Thus $$\mathsf{C}_d(V)\cong \mathcal H_{1,d}\langle V\rangle/\pi\Tr^C(\im br)= \hairy_{1,d}\langle V\rangle/\beta^{d-1}(\mathcal H^{\mathrm{ord}}_{d,d}\langle V\rangle).$$
\end{proof}

%
\begin{proof}[Proof of Theorem~\ref{thm:main}]
The top level partitions correspond to rank $1$ graphs.
Let $\mathcal H_{1,d,r}(V)$ be the subspace of $\mathcal H_{1,d}(V)$ spanned by graphs of rank $r$.
  We want to show that $$\mathcal H_{1,d,1}(V)/\im(\beta^{d-1})\cong (V^{\otimes s})_{D_{2d}}.$$ This latter space can be pictured as a loop with hairs, modulo the relation that one can slide a hair across the edge as in Theorem 4.1 of \cite{C}. Numbering the vertices of the loop consecutively, we see that $\beta^{d-1}$ applied to such a graph gives this slide relation, as in Figure~\ref{fig:fig1} (1). Thus $\mathcal H_{1,d,1}/\im(\beta^{d-1})$ is a quotient of $(V^{\otimes d})_{D_{2d}}$. Now consider $\beta^{d-1}$ applied to an arbitrary rank $1$ graph in $\mathcal H^{\mathrm{ord}}_{d,d}( V)$. It will either be $0$, or all but one of the applications of $\beta$ will correspond to a unique contraction.  This one exceptional case will then involve two trees $a$ and $b$ connected by a pair of edges as in Figure~\ref{fig:fig1} (3). Applying $\beta$ iteratively fills in the dotted edges in these trees to trees $\hat{a}$ and $\hat{b}$ respectively. This maps to $\hat{a}\hat{b}-\hat{b}\hat{a} \in (V^{\otimes d})_{D_{2d}}$ and is trivial there, since the two terms differ by a cyclic permutation.
\end{proof}
\begin{figure}
\begin{multline}
\begin{minipage}{5cm}
\resizebox{5cm}{!}{
\begin{tikzpicture}
\coordinate(a) at (0,.5);
\coordinate(b) at (1.5,.5);
\coordinate(c) at (3,.5);
\node[at=(a)] (A){$v_1$};
\node[at=(b)] (B){$v_2$};
\node[at=(c)] (C){$v_3$};
\draw[thick] (-.5,0) to (.5,0);
\draw[thick] (0,0) to (A);
\draw[thick] (1,0) to (2,0);
\draw[thick] (1.5,0) to (B);
\draw[thick] (2.5,0) to (3.5,0);
\draw[thick] (3,0) to (C);
\begin{scope}[decoration={markings,mark = at position 0.5 with {\arrow{stealth}}}]
\draw[densely dashed, postaction=decorate] (.5,0) to[densely dashed, postaction=decorate](1,0);
\draw[densely dashed, postaction=decorate] (2,0) to[densely dashed, postaction=decorate](2.5,0);
\draw[densely dashed, postaction=decorate] (3.5,0) to[densely dashed, postaction=decorate,out=-45,in=225](-.5,0);
\end{scope}
\coordinate(d) at (0,-.3);
\node[at=(d),shape=circle,draw,inner sep=2pt]{\tiny{$1$}};
\coordinate(e) at (1.5,-.3);
\node[at=(e),shape=circle,draw,inner sep=2pt]{\tiny{$2$}};
\coordinate(f) at (3,-.3);
\node[at=(f),shape=circle,draw,inner sep=2pt]{\tiny{$3$}};
\end{tikzpicture}}
\end{minipage}\!\!\!\!\!\!\!\!\!\!\!\!
\overset{\beta}{\mapsto }\!\!\!\!\!\!\!\!\!\!\!\!
\begin{minipage}{5cm}
\resizebox{5cm}{!}{
\begin{tikzpicture}
\coordinate(a) at (0,.5);
\coordinate(b) at (1.5,.5);
\coordinate(c) at (3,.5);
\node[at=(a)] (A){$v_1$};
\node[at=(b)] (B){$v_2$};
\node[at=(c)] (C){$v_3$};
\draw[thick] (-.5,0) to (.5,0);
\draw[thick] (0,0) to (A);
\draw[thick] (1,0) to (2,0);
\draw[thick] (1.5,0) to (B);
\draw[thick] (2.5,0) to (3.5,0);
\draw[thick] (3,0) to (C);
\draw[thick] (.5,0) to (1,0);
\begin{scope}[decoration={markings,mark = at position 0.5 with {\arrow{stealth}}}]
\draw[densely dashed, postaction=decorate] (2,0) to[densely dashed, postaction=decorate](2.5,0);
\draw[densely dashed, postaction=decorate] (3.5,0) to[densely dashed, postaction=decorate,out=-45,in=225](-.5,0);
\end{scope}
\coordinate(e) at (.75,-.3);
\node[at=(e),shape=circle,draw,inner sep=2pt]{\tiny{$1$}};
\coordinate(f) at (3,-.3);
\node[at=(f),shape=circle,draw,inner sep=2pt]{\tiny{$2$}};
\end{tikzpicture}}
\end{minipage}
\!\!\!\!\!\!\!\!\!\!\!\!
\overset{\beta}{\mapsto}
\!\!\!\!\!\!\!\!\!\!\!\!\\
\begin{minipage}{5cm}
\resizebox{5cm}{!}{
\begin{tikzpicture}
\coordinate(a) at (0,.5);
\coordinate(b) at (1.5,.5);
\coordinate(c) at (3,.5);
\node[at=(a)] (A){$v_1$};
\node[at=(b)] (B){$v_2$};
\node[at=(c)] (C){$v_3$};
\draw[thick] (-.5,0) to (.5,0);
\draw[thick] (0,0) to (A);
\draw[thick] (1,0) to (2,0);
\draw[thick] (1.5,0) to (B);
\draw[thick] (2.5,0) to (3.5,0);
\draw[thick] (3,0) to (C);
\draw[thick] (.5,0) to (1,0);
\draw[thick] (2,0) to (2.5,0);
\begin{scope}[decoration={markings,mark = at position 0.5 with {\arrow{stealth}}}]
\draw[densely dashed, postaction=decorate] (3.5,0) to[densely dashed, postaction=decorate,out=-45,in=225](-.5,0);
\end{scope}
\end{tikzpicture}}
\end{minipage}
\!\!\!\!\!\!\!\!\!\!\!\!
-
\!\!\!\!\!\!\!\!\!\!\!\!
\begin{minipage}{5cm}
\resizebox{5cm}{!}{
\begin{tikzpicture}
\coordinate(a) at (0,.5);
\coordinate(b) at (1.5,.5);
\coordinate(c) at (3,.5);
\node[at=(a)] (A){$v_1$};
\node[at=(b)] (B){$v_2$};
\node[at=(c)] (C){$v_3$};
\draw[thick] (-.5,0) to (.5,0);
\draw[thick] (0,0) to (A);
\draw[thick] (1,0) to (2,0);
\draw[thick] (1.5,0) to (B);
\draw[thick] (2.5,0) to (3.5,0);
\draw[thick] (3,0) to (C);
\draw[thick] (.5,0) to (1,0);
\draw[thick] (3.5,0) to[out=-45,in=225](-.5,0);
\begin{scope}[decoration={markings,mark = at position 0.5 with {\arrow{stealth}}}]
\draw[densely dashed, postaction=decorate] (2,0) to[densely dashed, postaction=decorate](2.5,0);
\end{scope}
\end{tikzpicture}}
\end{minipage}
\!\!\!\!\!\!\!\!\!\!\!\!
\leftrightarrow v_1v_2v_3-v_3v_1v_2
\end{multline}
\begin{equation}
\begin{minipage}{5cm}
\resizebox{5cm}{!}{
\begin{tikzpicture}
\coordinate(a) at (0,.5);
\coordinate(b) at (1.5,.5);
\coordinate(c1) at (2.5,2);
\coordinate(c2) at (3.5,2);
\node[at=(a)] (A){$v_1$};
\node[at=(b)] (B){$v_2$};
\node[at=(c1)] (C1) {$v_3$};
\node[at=(c2)] (C2) {$v_4$};
\draw[thick] (-.5,0) to (.5,0);
\draw[thick] (0,0) to (A);
\draw[thick] (1,0) to (2,0);
\draw[thick] (1.5,0) to (B);
\draw[thick] (2.5,0) to (3.5,0);
\draw[thick] (3,0) to (3,.5);
\draw[thick] (C1) to (3,1.5) to (C2);
\draw[thick] (3,1.5) to (3,1);
\begin{scope}[decoration={markings,mark = at position 0.5 with {\arrow{stealth}}}]
\draw[densely dashed, postaction=decorate] (.5,0) to[densely dashed, postaction=decorate](1,0);
\draw[densely dashed, postaction=decorate] (2,0) to[densely dashed, postaction=decorate](2.5,0);
\draw[densely dashed, postaction=decorate] (3.5,0) to[densely dashed, postaction=decorate,out=-45,in=225](-.5,0);
\draw[densely dashed, postaction=decorate] (3,.5) to[densely dashed, postaction=decorate](3,1);
\end{scope}
\coordinate(d) at (0,-.3);
\node[at=(d),shape=circle,draw,inner sep=2pt]{\tiny{$1$}};
\coordinate(e) at (1.5,-.3);
\node[at=(e),shape=circle,draw,inner sep=2pt]{\tiny{$2$}};
\coordinate(f) at (3,-.3);
\node[at=(f),shape=circle,draw,inner sep=2pt]{\tiny{$4$}};
\coordinate(g) at (3.5,1.5);
\node[at=(g),shape=circle,draw,inner sep=2pt]{\tiny{$3$}};
\end{tikzpicture}}
\end{minipage}
\!\!\!\!\!\!\!\!\!\!\!\!
\overset{\beta}{\mapsto}
\!\!\!\!\!\!\!\!\!\!\!\!
\begin{minipage}{5cm}
\resizebox{5cm}{!}{
\begin{tikzpicture}
\coordinate(a) at (0,.5);
\coordinate(b) at (1.5,.5);
\coordinate(c1) at (2.5,2);
\coordinate(c2) at (3.5,2);
\node[at=(a)] (A){$v_1$};
\node[at=(b)] (B){$v_2$};
\node[at=(c1)] (C1) {$v_3$};
\node[at=(c2)] (C2) {$v_4$};
\draw[thick] (-.5,0) to (.5,0);
\draw[thick] (0,0) to (A);
\draw[thick] (1,0) to (2,0);
\draw[thick] (1.5,0) to (B);
\draw[thick] (2.5,0) to (3.5,0);
\draw[thick] (3,0) to (3,.5);
\draw[thick] (C1) to (3,1.5) to (C2);
\draw[thick] (3,1.5) to (3,1);
\draw[thick] (.5,0) to (1,0);
\begin{scope}[decoration={markings,mark = at position 0.5 with {\arrow{stealth}}}]
\draw[densely dashed, postaction=decorate] (2,0) to[densely dashed, postaction=decorate](2.5,0);
\draw[densely dashed, postaction=decorate] (3.5,0) to[densely dashed, postaction=decorate,out=-45,in=225](-.5,0);
\draw[densely dashed, postaction=decorate] (3,.5) to[densely dashed, postaction=decorate](3,1);
\end{scope}
\coordinate(d) at (.75,-.3);
\node[at=(d),shape=circle,draw,inner sep=2pt]{\tiny{$1$}};
\coordinate(f) at (3,-.3);
\node[at=(f),shape=circle,draw,inner sep=2pt]{\tiny{$3$}};
\coordinate(g) at (3.5,1.5);
\node[at=(g),shape=circle,draw,inner sep=2pt]{\tiny{$2$}};
\end{tikzpicture}}
\end{minipage}
\!\!\!\!\!\!\!\!\!\!\!\!
\overset{\beta}{\mapsto} 0
\end{equation}
\begin{equation}
\cdots\overset{\beta}{\mapsto}
\begin{minipage}{1.5cm}
\begin{tikzpicture}
\coordinate(a) at (0,0);
\node[operad, at=(a)](A){$a$};
\coordinate(b) at (0,-1);
\node[operad, at=(b)](B){$b$};
\begin{scope}[decoration={markings,mark = at position 0.5 with {\arrow{stealth}}}]
\draw[densely dashed, postaction=decorate] (B) to[densely dashed, postaction=decorate, out=180,in=180](A);
\draw[densely dashed, postaction=decorate] (A) to[densely dashed, postaction=decorate, out=0,in=0](B);
\end{scope}
\coordinate(d) at (.5,.25);
\node[at=(d),shape=circle,draw,inner sep=2pt]{\tiny{$1$}};
\coordinate(e) at (.5,-1.25);
\node[at=(e),shape=circle,draw,inner sep=2pt]{\tiny{$2$}};
\end{tikzpicture}
\end{minipage}
\overset{\beta}{\mapsto}
\begin{minipage}{1.5cm}
\begin{tikzpicture}
\coordinate(a) at (0,0);
\node[operad, at=(a)](A){$a$};
\coordinate(b) at (0,-1);
\node[operad, at=(b)](B){$b$};
\draw[thick] (A) to[ out=0,in=0](B);
\begin{scope}[decoration={markings,mark = at position 0.5 with {\arrow{stealth}}}]
\draw[densely dashed, postaction=decorate] (B) to[densely dashed, postaction=decorate, out=180,in=180](A);
\end{scope}
\end{tikzpicture}
\end{minipage}
-
\begin{minipage}{1.5cm}
\begin{tikzpicture}
\coordinate(a) at (0,0);
\node[operad, at=(a)](A){$a$};
\coordinate(b) at (0,-1);
\node[operad, at=(b)](B){$b$};
\draw[thick] (B) to[out=180,in=180](A);
\begin{scope}[decoration={markings,mark = at position 0.5 with {\arrow{stealth}}}]
\draw[densely dashed, postaction=decorate] (A) to[densely dashed, postaction=decorate, out=0,in=0](B);
\end{scope}
\end{tikzpicture}
\end{minipage}
\overset{\beta}{\mapsto}
\cdots\overset{\beta}{\mapsto} \hat{a}\hat{b}-\hat{b}\hat{a}
\end{equation}
\caption{(1):$\beta^2$ applied to an element of $\mathcal H^{\mathrm{ord}}_{3,3}$, yielding an element which is $0$ in $[V^{\otimes 3}]_{D_{6}}$. 
(2) An example where $\beta^3$ applied to an element of $\mathcal H^{\mathrm{ord}}_{4,4}$ is $0$.
(3) The general case of $\beta^{d-1}$ giving a nontrivial combination of graphs. At some point, two trees denoted $a$ and $b$ are numbered $1$ and $2$ and are connected to each other in two different ways.
Thus $\beta$ gives a difference of the two different ways of connecting. Eventually, all of the dotted edges internal to $a$ and $b$ get filled in, yielding trees $\hat{a}$ and $\hat{b}$ which represent elements of the tensor algebra $T(V)$. The result $ \hat{a}\hat{b}-\hat{b}\hat{a}$ is zero modulo cyclic permutation.
} \label{fig:fig1}
\end{figure}
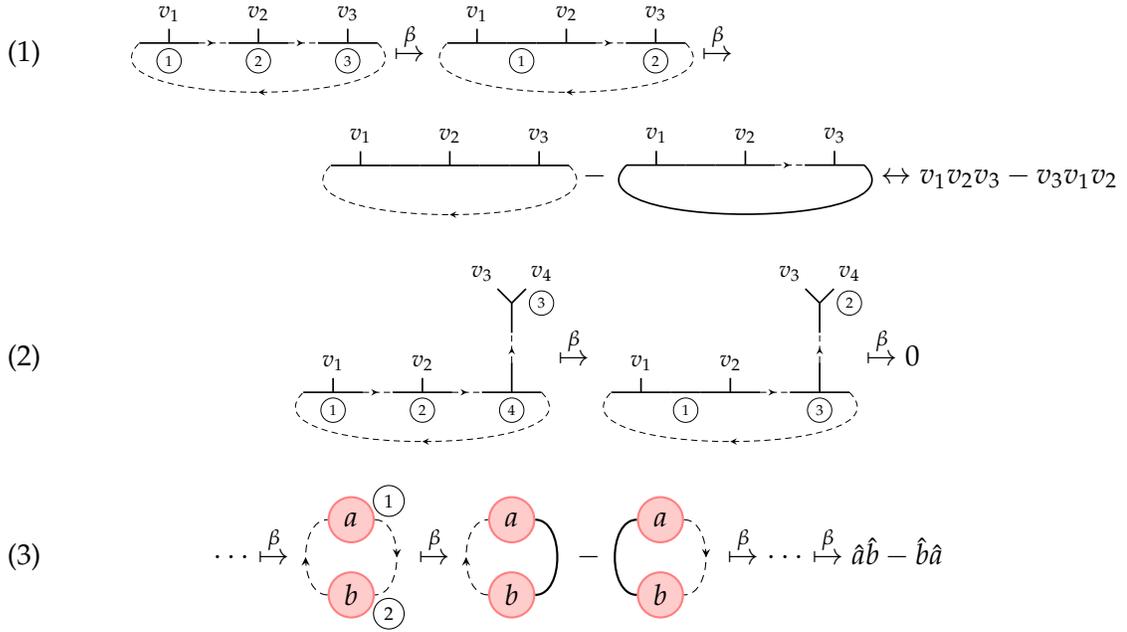


\end{document}